\chardef\bslash=`\\ 
\def\verbatim{\interlinepenalty\@M \@verbatim
  \leftskip\@totalleftmargin\advance\leftskip2pc
  \frenchspacing\@vobeyspaces \@xverbatim}
\newtheorem{thm}{Theorem}[section]
\newtheorem{cor}[thm]{Corollary}
\newtheorem{lem}[thm]{Lemma}
\newtheorem{pro}[thm]{Proposition}
\newtheorem{defin}[thm]{Definition}
\newtheorem{ex}[thm]{Example}
\newtheorem{que}[thm]{Question}
\begin{document}


\title
{A Note On Minimal Separating Function Sets}
\author{Raushan Buzyakova}
\email{Raushan\_Buzyakova@yahoo.com}
\address{Miami, Florida, U.S.A.}
\author{Oleg Okunev}
\email{oleg@servidor.unam.mx}
\address{Facultad de Ciencias Físico-Matematicas, Benemérita Universidad Autonoma de Puebla, Apdo postal 1152, Puebla, Puebla CP 72000, Mexico}
\keywords{$C_p(X)$,  minimal point-separating function set, discrete space, the diagonal of a space}
\subjclass{54C35, 54E45, 54A25}


\begin{abstract}{
We study point-separating function sets that are minimal  with respect to the property of being separating.
We first show that for a compact space $X$  having a minimal separating function set in $C_p(X)$ is equivalent to having a minimal separating collection of functionally open sets in $X$. We also identify a nice visual property of  $X^2$ that may be responsible for the existence of a minimal  separating function family for $X$ in $C_p(X)$. We then discuss various questions and directions around the topic.
}
\end{abstract}

\maketitle
\markboth{R. Buzyakova and O. Okunev}{A Note On Minimal Separating Function Sets}
{ }

\par\bigskip
\section{Introduction}\label{S:introduction}
\par\bigskip\noindent
In this discussion we assume that all spaces are Tychonov. In notations and terminology we will follow \cite{Eng} and \cite{Arh}
Recall that a function set $F\subset C_p(X)$ is {\it  point separating} if for every distinct $x,y\in X$ there exists $f\in F$ such that $f(x)\not = f(y)$. We will refer to such sets as {\it separating function sets}.

\par\smallskip\noindent
Ample research has been done on separating function sets. It is natural to look for separating sets that have additional nice properties such as compactness, metrizability, etc. Note that it  is quite likely that a randomly rendered separating function  set contains a proper separating subset with perhaps better topological properties. With this in mind we would like to take a closer look  at {\it }minimal separating function sets. 

\par\bigskip\noindent
{\bf Definition} {\it
A set $F\subset C_p(X)$ is a minimal separating function set if $F$ is separating for  $X$ and no proper subset of $F$ is separating.  If $F$ is separating, closed (compact),  and has not no proper closed (compact) separating  subset, $F$ is a minimal    closed (compact) separating function set.
}
\par\bigskip\noindent
In this paper we are concerned with the following general problem:

\par\bigskip\noindent
{\bf Problem.} {\it
What properties of $X,X^n$ are sufficient/necessary for $X$ to have a  minimal separating function set in $C_p(X)$?  A  minimal closed (compact) separating function  set in $C_p(X)$?
}
\par\bigskip\noindent
In the first part of our study we will concentrate on minimal separating functions sets and then we will zoom into minimal closed (compact) separating function sets.

\par\bigskip
\section{Equivalence}\label{S:equivavlence}
\par\bigskip\noindent
In this section we would like to make an observation that for a compact space $X$ having a minimal separating function set  in $C_p(X)$ is equivalent to having a minimal point-separating family of functionally open subsets in $X$. One part of this equivalence  is quite obvious but the other requires some work.  Recall that   a family  of subsets of $X$ is called {\it point-separating} if for any two distinct elements $x$ and $y$ in $X$ there exists an element in the family that contains $x$ or $y$ but not both. Such a family is minimal if no proper subfamily of it is point-separating. 
\par\bigskip\noindent
\begin{thm}\label{thm:topologyToCp}
If $X$ has a minimal separating family of functionally open sets then $X$ has a minimal separating function set.
\end{thm}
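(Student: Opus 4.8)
The plan is to extract from the given family $\mathcal U$ one continuous function per member, arranging that each function carries no more information than its open set is allowed to carry; the obvious choice of cozero witnesses need not be minimal, and it is precisely this repair that takes work.

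Write $\mathcal U=\{U_s:s\in S\}$ for the minimal separating family of functionally open (i.e.\ cozero) sets, and for each $s$ fix $f_s\in C_p(X)$ with values in $[0,1]$ such that $f_s(x)>0$ exactly when $x\in U_s$. (If it is convenient, I would first replace $X$ by its image under the diagonal map $x\mapsto(f_s(x))_{s\in S}$ into $[0,1]^S$, so that each $U_s$ becomes $\{x:x(s)>0\}$; since that map is a continuous injection, a minimal separating function set on the image pulls back to one on $X$.) For each $s$ I would then look at the equivalence relation $\sim_s$ on $X$ in which $x\sim_s y$ means that $x$ and $y$ lie on the same side of $U_t$ for every $t\neq s$. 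Because $\mathcal U$ separates points while $\mathcal U\setminus\{U_s\}$ does not, $\sim_s$ has a class of more than one point; and every such class has exactly two points, since three pairwise distinct points agreeing on every $U_t$ with $t\neq s$ would have to be pairwise separated by the single set $U_s$, which is impossible. So I may fix a two-element class $\{a_s,b_s\}$ with $a_s\in U_s$ and $b_s\notin U_s$; note that for each $t\neq s$ the points $a_s$ and $b_s$ lie on the same side of $U_t$.

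Next I would reduce the theorem to the following: for every $s$, find $g_s\in C_p(X)$ with values in $[0,1]$, positive exactly on $U_s$, and with the extra property that $g_t(a_s)=g_t(b_s)$ for all $t\neq s$. Granting such functions, the family $F=\{g_s:s\in S\}$ separates points (distinct points of $X$ lie on opposite sides of some $U_s$, and then $g_s$ distinguishes them), while deleting any single $g_s$ leaves $a_s$ and $b_s$ unseparated: each remaining $g_t$ either vanishes at both of them (when $U_t$ contains neither) or takes equal positive values there (by the extra property), and there is no other way for $g_t$ to tell the two apart. Hence no proper subfamily of $F$ separates points, so $F$ is a minimal separating function set, which is what we want.

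The step I expect to be the real obstacle is the construction of the $g_s$. Fixing $s$, the requirement is that $g_s$ be a cozero witness for $U_s$ which is, in addition, constant on each pair $\{a_t,b_t\}$ (for $t\neq s$) that lies inside $U_s$; pairs not contained in $U_s$ cost nothing, since $g_s$ vanishes off $U_s$. When every $U_s$ is clopen this is trivial --- take $g_s$ to be the indicator of $U_s$, which carries no information beyond $U_s$ itself --- and the whole difficulty is concentrated in the non-clopen case: a witness for $U_s$ is forced to take arbitrarily small values near the points of $\overline{U_s}\setminus U_s$, so an equality tying a ``deep'' point of $U_s$ to a ``shallow'' one may be outright inconsistent, and the minimality of $\mathcal U$ has to be invoked to rule this out. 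The route I would pursue is to collapse, inside $U_s$, the two points of each interfering pair; because these identifications never straddle the boundary of $U_s$, the image of $U_s$ in the quotient $X/{\approx_s}$ is open, and a function on $X$ of the desired kind is exactly a cozero witness of that image pulled back through the quotient map. Producing such a witness --- by keeping $\approx_s$ tame (if necessary via a compatible choice of the pairs $\{a_s,b_s\}$) and by assembling $g_s$ in stages along a presentation of $U_s$ as a countable increasing union of zero sets --- is the heart of the argument; everything before it is bookkeeping.
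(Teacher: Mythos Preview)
Your suspicion is correct: fixing an arbitrary cozero witness $f_O$ for each $O\in\mathcal U$ need not produce a minimal family. On the three-point discrete space $X=\{a,b,c\}$ the family $U_1=\{a,b\}$, $U_2=\{a\}$ is a minimal separating family of (cl)open sets, yet the witness $f_1$ with $f_1(a)=1$, $f_1(b)=1/2$, $f_1(c)=0$ is injective, so $\{f_1,f_2\}$ is not minimal whatever $f_2$ is. The paper's proof, however, is exactly the naive argument you reject: it picks one $f_O$ per $O$ with $f_O^{-1}((0,1])=O$ and asserts, with the single word ``Clearly,'' that $\{f_O:O\in\mathcal O\}$ is minimal separating. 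So the gap you are trying to repair is real, and the paper's two-line proof does not close it.

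Your own attempt, though, is also incomplete. The reduction you set up --- pick a two-point $\sim_s$-class $\{a_s,b_s\}$ for each $s$ and look for cozero witnesses $g_s$ of $U_s$ that are constant on every foreign pair $\{a_t,b_t\}\subset U_s$ --- is sound, and the observation that nontrivial $\sim_s$-classes have exactly two points is correct. But you stop at what you yourself call ``the real obstacle'': the functions $g_s$ are never constructed. The quotient route you sketch is not clearly viable, since the identifications $a_t\approx_s b_t$ may accumulate badly (nothing prevents the pairs from clustering along $\overline{U_s}\setminus U_s$, or their transitive closure from being large), so $X/{\approx_s}$ need not be Hausdorff, let alone carry a cozero witness for the image of $U_s$; and you give no argument that a more careful choice of the pairs avoids this. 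In short, you have correctly located a hole in the paper's argument, but your proposal leaves the same theorem unproved.
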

\begin{proof}
Let $\mathcal O$ be a family as in the hypothesis. For each $O\in \mathcal O$, fix $f_O: X\to [0,1]$ such that $f^{-1}((0,1])=O$. Clearly, $\{f_O:O\in \mathcal O\}$ is a minimal separating function set.
\end{proof}

\par\bigskip\noindent
To reverse the statement of Theorem \ref{thm:topologyToCp}, we  first prove four very technical assertions.

\par\bigskip\noindent
\begin{pro}\label{pro:discreteinsquare}
Let $X$ have a $\tau$-sized minimal separating function set. Then there exists a $\tau$-sized $D\subset (X\times X)\setminus \Delta_X$ that is discrete and closed in $(X\times X)\setminus \Delta_X$.
\end{pro}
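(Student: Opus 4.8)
The plan is to read the required set $D$ directly off the minimality of the separating family, using the functions themselves to build the separating neighborhoods. Fix a minimal separating set $F=\{f_\alpha:\alpha<\tau\}\subset C_p(X)$. For each $\alpha<\tau$, since $F\setminus\{f_\alpha\}$ fails to be separating, there are distinct points $x_\alpha,y_\alpha\in X$ with $f_\beta(x_\alpha)=f_\beta(y_\alpha)$ for every $\beta\neq\alpha$; and because $F$ itself is separating, we must have $f_\alpha(x_\alpha)\neq f_\alpha(y_\alpha)$. Put $d_\alpha=(x_\alpha,y_\alpha)$; then $d_\alpha\notin\Delta_X$, and I let $D=\{d_\alpha:\alpha<\tau\}$.

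The computational core is the continuous map $g_\alpha\colon X\times X\to\mathbb{R}$ given by $g_\alpha(x,y)=f_\alpha(x)-f_\alpha(y)$. By the choice of the witness pairs, $g_\alpha(d_\alpha)\neq 0$ while $g_\alpha(d_\beta)=0$ for every $\beta\neq\alpha$. Hence $U_\alpha:=g_\alpha^{-1}(\mathbb{R}\setminus\{0\})$ is an open subset of $X\times X$ containing $d_\alpha$ and no other point of $D$. This one observation does double duty: it forces the $d_\alpha$ to be pairwise distinct (if $d_\alpha=d_\beta$ with $\alpha\neq\beta$, then $d_\beta\in U_\alpha$, contradicting $g_\alpha(d_\beta)=0$), so $|D|=\tau$; and it shows $D$ is discrete in $X\times X$, hence discrete in the subspace $(X\times X)\setminus\Delta_X$.

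It remains to check that $D$ is closed in $(X\times X)\setminus\Delta_X$. Let $(a,b)\in(X\times X)\setminus\Delta_X$, so $a\neq b$; since $F$ is separating, pick $\gamma<\tau$ with $f_\gamma(a)\neq f_\gamma(b)$, i.e.\ $(a,b)\in U_\gamma$. As above, $U_\gamma$ meets $D$ in at most the single point $d_\gamma$; if $(a,b)\neq d_\gamma$ one intersects $U_\gamma$ with the open set $(X\times X)\setminus\{d_\gamma\}$ to obtain a neighborhood of $(a,b)$ missing $D$, and if $(a,b)=d_\gamma\in D$ then $U_\gamma$ itself witnesses that $(a,b)$ is not a limit point of $D$. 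Thus no point of $(X\times X)\setminus\Delta_X$ outside $D$ is a limit point of $D$, so $D$ is closed in that subspace.

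There is no serious obstacle here; the only point worth flagging is that $D$ need not be closed in $X\times X$ itself, because its accumulation can (and typically does) occur along the diagonal $\Delta_X$, which is precisely why the statement is phrased for the subspace $(X\times X)\setminus\Delta_X$. Keeping that distinction straight is the one thing that requires care.
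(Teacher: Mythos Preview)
Your proof is correct and follows essentially the same line as the paper's: choose witness pairs $(x_f,y_f)$ for each $f\in F$, and for an arbitrary off-diagonal point use a separating function $h\in F$ to produce an open neighborhood that can contain at most the single point $(x_h,y_h)$ of $D$. The only cosmetic difference is packaging: you phrase the key neighborhood as $g_\gamma^{-1}(\mathbb{R}\setminus\{0\})$ via the difference map $g_\gamma(x,y)=f_\gamma(x)-f_\gamma(y)$, whereas the paper uses an $\epsilon$-box $h^{-1}\bigl((h(x)-\epsilon,h(x)+\epsilon)\bigr)\times h^{-1}\bigl((h(y)-\epsilon,h(y)+\epsilon)\bigr)$ with $\epsilon=|h(x)-h(y)|/3$; both isolate the same set of witness pairs.
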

\begin{proof}
Fix a minimal separating function  set $F\subset C_p(X)$ of cardinality $\tau$. By minimality, for each $f\in F$ we can fix distinct $x_f,y_f$ in $F$ that are separated by $f$ but not by any other function in $F\setminus \{f\}$.
The set $D=\{\langle x_f,y_f\rangle : f\in F\}$ is a subset of $(X\times X)\setminus \Delta_X$. To show that $D$ is a closed discrete subset in the off-diagonal, fix $\langle x,y\rangle$ in $X\times X$ with $x\not = y$. We need to show that $\langle x,y\rangle$ can be separated from $D\setminus \{\langle x,y\rangle\}$ by an open neighborhood.

Since $F$ is separating, there exists $h\in F$  such that $h(x)\not = h(y)$.
Let $\epsilon = |h(x)-h(y)|/3$. Recall that $h$  does not separate $x_f$ from $y_f$ if $h\not = f$. Therefore, 
$(h(x)-\epsilon, h(x)+\epsilon)\times  (h(y)-\epsilon, h(y)+\epsilon)$ is  a desired open neighborhood of $\langle x,y\rangle$.
\end{proof}

\par\bigskip\noindent
In our  discussion, we will reference  not only the statement of Proposition \ref{pro:discreteinsquare} but its argument too, namely, the definition of $D$ in terms of elements of $F$.
\par\bigskip\noindent
\begin{lem}\label{lem:sequencetozero}
There exists a collection $\{U_n:n=1,2,...\}$ of open subsets of $\mathbb R$ with the following properties:
\begin{enumerate}
	\item $\{U_n:n=1,2,...\}$ is point separating,
	\item $U_n$ contains $0$ for each $n=1,2,...$,
	\item $U_n$ is the only element of the collection that separates $0$ from $1/n$.
\end{enumerate}
\end{lem}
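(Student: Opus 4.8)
The plan is to produce the family in the explicit form
\[
U_n:=\mathbb R\setminus\bigl(F_n\cup\{1/n\}\bigr),
\]
where each $F_n$ is a closed set disjoint from $K:=\{0\}\cup\{1/m:m\ge1\}$. Concretely, I would take $\mathcal F$ to be the collection of all closed intervals $[r,s]$ with $r,s\in\mathbb Q$, $r\le s$, and $[r,s]\subset\mathbb R\setminus K$; since $K\subset[0,1]$, this collection is countably infinite, so it can be enumerated as $\mathcal F=\{F_n:n\ge1\}$. Because $K$ is compact, $\mathbb R\setminus K$ is open, and each $F_n\cup\{1/n\}$ is closed, so every $U_n$ is open; this is essentially the only topological input, the remainder being bookkeeping about which points lie in which $U_n$.

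Conditions (2) and (3) will then follow from a single observation: since $F_n\cap K=\varnothing$, the unique point of $K$ omitted from $U_n$ is $1/n$. Hence $0\in U_n$ for all $n$, which is (2); moreover $1/n\notin U_n$ while $1/m\in U_m$ whenever $m\ne n$, so $U_n$ separates $0$ from $1/n$ and no other member of the family does, which is (3). Writing $c(x):=\{n:x\in U_n\}$, we have thereby recorded $c(0)=\mathbb N$ and $c(1/k)=\mathbb N\setminus\{k\}$.

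For the point-separating property (1) I would argue by cases on how many of the two given points belong to $K$. If $x\notin K$, then, $\mathbb R\setminus K$ being open, $x$ lies in infinitely many members of $\mathcal F$; since $c(x)=\{n:x\notin F_n\}$, the set $c(x)$ omits infinitely many indices, hence is neither $\mathbb N$ nor any $\mathbb N\setminus\{k\}$, so $x$ is already separated by the family from $0$ and from every $1/k$. If $x$ and $y$ are distinct and both lie outside $K$, use that the rational closed subintervals of the open set $\mathbb R\setminus K$ separate its points: choose $F_n\in\mathcal F$ containing exactly one of $x,y$; since neither equals $1/n$, the set $U_n$ contains exactly the other one. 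The remaining case $x,y\in K$ is covered by the values of $c$ above.

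The one step needing care — and the only place the argument could fail — is the interaction of the extra hole $\{1/n\}$ with the separation furnished by $F_n$: one must be sure that deleting the single point $1/n$ from $\mathbb R\setminus F_n$ never undoes a separation we want. This is precisely why $\mathcal F$ was required to avoid $K$ entirely: then the hole is invisible to every point outside $K$, and for points of $K$ the values $c(0)$ and $c(1/k)$ have already been pinned down. The remaining verifications — countability of $\mathcal F$, that its rational closed subintervals form a point-separating family on $\mathbb R\setminus K$, and that each point of $\mathbb R\setminus K$ meets infinitely many of them — are routine and use only that $\mathbb R\setminus K$ is a nonempty open subset of $\mathbb R$.
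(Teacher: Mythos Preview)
Your construction is correct. The verifications you flag as routine really are: $\mathcal F$ is countable (it injects into $\mathbb Q^2$) and infinite (e.g.\ $[2,3],[2,4],\dots\in\mathcal F$); any $x\in\mathbb R\setminus K$ has an open interval neighborhood inside $\mathbb R\setminus K$, hence lies in infinitely many rational closed subintervals of $\mathbb R\setminus K$; and given distinct $x,y\in\mathbb R\setminus K$, shrink such a neighborhood of $x$ to have diameter $<|x-y|$ and pick a rational closed subinterval to get an $F_n$ containing $x$ but not $y$. Your bookkeeping with $c(\cdot)$ handles the rest cleanly.

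The paper follows the same underlying scheme---fix a countable family that separates points of $\mathbb R\setminus S$ and then doctor the $n$th member so that among the points of $S=K$ it omits exactly $1/n$---but realizes it differently. The paper works additively: it chooses separating \emph{pairs} $(A_n,B_n)$ of open sets with closures disjoint from $S$, picks an open neighborhood $O_n$ of $S$ disjoint from $\bar A_n\cup\bar B_n$, and sets $U_n=(O_n\setminus\{1/n\})\cup A_n$. You work subtractively: a single closed interval $F_n\subset\mathbb R\setminus K$ and $U_n=\mathbb R\setminus(F_n\cup\{1/n\})$. Your version is a bit more economical---no auxiliary neighborhoods $O_n$ and no pairs---while the paper's formulation with disjoint pairs makes the separation of two points outside $S$ more explicit. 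Either way the essential idea is the same.
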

\begin{proof}
Put $S= \{0\}\cup\{1/n:n=1,2,...\}$.  Since $\mathbb R$ is second-countable and $S$ is closed in $\mathbb R$,  we can fix $\mathcal P = \{\langle A_n, B_n\rangle:n=1,2,..\}$ with the following properties:
\begin{description}
	\item[\rm P1] $A_n,B_n$ are open in $\mathbb R$ and their closures miss $S$,
	\item[\rm P2] $\bar A_n\cap \bar B_n =\emptyset$,
	\item[\rm P3] For any distinct $x,y\in \mathbb R\setminus S$ there exists $n$ such that $x\in A_n$ and $y\in B_n$.
\end{description}
By P1, for each $n$, we can fix an open neighborhood $O_n$ of $S$ that misses $\bar A_n\cup \bar B_n$. Put $U_n = [O_n\setminus \{1/n\}]\cup A_n$. Let us show that $\mathcal U=\{U_n:n=1,2,...\}$ is as desired.
\par\smallskip\noindent
Properties (2) and (3) are incorporated in the definition of $U_n$'s. To show that $\mathcal U$ is point separating, fix distinct $x,y\in \mathbb R$. We have three cases:
\begin{description}
	\item[\rm Case ($x,y\not \in S$)] By P3, there exists $n$ such that $x\in A_n$ and $y\in B_n$. Then $x\in U_n$. Since $\bar O_n$ and $\bar A_n$ miss $\bar B_n$, we conclude that $U_n$ misses $B_n$. Therefore, $y\not \in U_n$.
	\item[\rm Case ($x\not \in S, y\in S$)] Pick any positive integer $K$ such that $y\not = 1/K$ and $x\not = K$. Since 
$x,K\not\in S$ and $K\not = x$, by P3, there exists $n$ such that $K\in A_n$ and $x\in B_n$. Then $U_n$ contains $y$ but does not contain $x$.
	\item[\rm Case ($x,y\in S$)] We  may assume that $x = 1/n$. Then $U_n$ contains $y$ but not $x$.
\end{description}
The proof is complete.
\end{proof}

\par\bigskip\noindent
\begin{lem}\label{lem:twosequencestozero}
There exists a collection $\{U_n:n=1,2,...\}$ of open subsets of $\mathbb R$ with the following properties:
\begin{enumerate}
	\item $\{U_n:n=1,2,..\}$ is point separating,
	\item $U_n$ is the only element of the collection that separates $-1/n$ from $1/n$.
\end{enumerate}
\end{lem}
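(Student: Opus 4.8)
The plan is to imitate the proof of Lemma~\ref{lem:sequencetozero}: I will build each $U_n$ as the union of a small neighbourhood of a carefully chosen ``trace'' $T_n=U_n\cap S$ on
\[
S=\{0\}\cup\{1/j:j\ge 1\}\cup\{-1/j:j\ge 1\},
\]
a closed subset of $\mathbb R$ all of whose nonzero points are isolated, together with a piece $A_n$ that lives far from $S$ and is responsible for separating points off $S$. The whole novelty is the choice of $T_n$: the naive choice $T_n=S\setminus\{1/n\}$ — which sufficed in Lemma~\ref{lem:sequencetozero} because there $S$ had no mirror pairs — fails here, since each such $T_m$ would contain every point $-1/j$, and then the family could never separate $-1/j$ from $-1/k$.

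So first I would set, for $n\ge 1$,
\[
T_n=S\setminus\{\,1/n,\ 1/(n-1),\ -1/(n-1)\,\}
\]
(with the convention that the terms $1/0,-1/0$ are omitted, so $T_1=S\setminus\{1\}$): delete from $S$ the point $1/n$ and the whole mirror pair at level $n-1$. Each $T_n$ is $S$ with finitely many isolated points removed, hence relatively open in $S$, contains $0$, and contains a tail of $S$. A direct bookkeeping of which $T_m$ omits which point — one checks $1/j\notin T_m\iff m\in\{j,j+1\}$, $-1/j\notin T_m\iff m=j+1$, and $0\in T_m$ always — yields the four facts I need: (a) $T_n$ contains $-1/n$ but not $1/n$; (b) for $m\ne n$ the set $T_m$ contains both or neither of $-1/n,1/n$ (the only way $T_m$ could omit the single point $-1/n$ is through the deleted pair at level $m-1$, but that omits $1/n$ as well); (c) $\{T_n\}$ separates $S$, by a short case split according to whether the two given points are $0$, both positive, both negative, or of opposite sign; (d) for each $x\in S$ the set $\{n:x\in T_n\}$ is cofinite.

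Next, by the same argument as in Lemma~\ref{lem:sequencetozero} (second countability and closedness of $S$), I would fix a countable family $\{\langle A_n,B_n\rangle:n\ge 1\}$ of pairs of open subsets of $\mathbb R$ with $\bar A_n,\bar B_n$ pairwise disjoint and disjoint from $S$, such that for any distinct $p,q\in\mathbb R\setminus S$ one has $p\in A_n$ and $q\in B_n$ for some $n$; by listing each pair $\aleph_0$ times I may also require this to happen for infinitely many $n$, so that every point of $\mathbb R\setminus S$ lies in infinitely many $B_n$. For each $n$ choose an open $O_n\supseteq S$ with $O_n\cap(\bar A_n\cup\bar B_n)=\emptyset$, then an open $V_n\subseteq O_n$ with $V_n\cap S=T_n$ (a ball about $0$ of radius $<1/n$ together with small intervals around the finitely many remaining points of $T_n$, all kept inside $O_n$), and put $U_n=V_n\cup A_n$. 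Then $U_n$ is open, $U_n\cap S=T_n$, and $U_n\cap B_n=\emptyset$.

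Finally I would check that $\{U_n\}$ is point separating in three cases. Two points of $\mathbb R\setminus S$ are separated by the defining property of $\{\langle A_n,B_n\rangle\}$ ($x\in A_n\subseteq U_n$, while $y\in B_n$ gives $y\notin U_n$). Two points of $S$ are separated by (c). A point $x\in S$ and a point $y\notin S$ are separated by any $U_n$ whose index $n$ lies in the intersection of the cofinite set $\{n:x\in T_n\}$ (from (d)) and the infinite set $\{n:y\in B_n\}$: then $x\in T_n\subseteq U_n$, but $y\in B_n$ forces $y\notin U_n$. Together with (b) — which says $U_n$ is the only member of the collection separating $-1/n$ from $1/n$ — this finishes the proof. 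The crux, and the only nonroutine point, is the design of the traces: property (b) forces each $T_m$ to be ``symmetric'' away from level $m$, while the extra deleted mirror pair in $T_n$ is exactly the slack that both makes the negative points separable from one another and makes the sets $\{n:x\in T_n\}$ cofinite, which is what carries the mixed case above.
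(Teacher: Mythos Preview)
Your proof is correct and follows essentially the same approach as the paper: the key choice of traces $T_n = S \setminus \{1/n,\ \pm 1/(n-1)\}$ (with $T_1 = S \setminus \{1\}$) is identical, and the rest is the same template of combining a neighbourhood realizing $T_n$ with an $A_n$ coming from a separating family of pairs on $\mathbb R \setminus S$. Your treatment is in fact slightly more careful than the paper's---you build a genuinely open $V_n$ with $V_n\cap S=T_n$ rather than writing $O_n$ minus finitely many (non-isolated in $\mathbb R$) points, and your device of listing each pair $\langle A_n,B_n\rangle$ infinitely often gives a cleaner handling of the mixed case $x\in S$, $y\notin S$---but the route is the same.
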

\begin{proof}
Put $S= \{0\}\cup\{\pm 1/n:n=1,2,...\}$.  Since $\mathbb R$ is second-countable and $S$ is closed in $\mathbb R$,  we can fix $\mathcal P = \{\langle A_n, B_n\rangle:n=1,2,..\}$ with the following properties:
\begin{description}
	\item[\rm P1] $A_n,B_n$ are open in $\mathbb R$ and their closures miss $S$,
	\item[\rm P2] $\bar A_n\cap \bar B_n =\emptyset$,
	\item[\rm P3] For any distinct $x,y\in \mathbb R\setminus S$ there exists $n$ such that $x\in A_n$ and $y\in B_n$.
\end{description}
By P1, for each $n$, we can fix an open neighborhood $O_n$ of $S$ that misses $\bar A_n\cup \bar B_n$.
 Put $U_1 = [O_1 \setminus \{1\}]\cup A_1$. If $n>1$, put
$U_n = [O_n \setminus \{1/n,\pm 1/(n-1)\}]\cup A_n$. Let us show that $\mathcal U=\{U_n:n=1,2,...\}$ is as desired.
\par\smallskip\noindent
Property (2) in the conclusion of the lemma's statement  is  incorporated in the definition of $U_n$'s. To show that $\mathcal U$ is point separating, fix distinct $x,y\in \mathbb R$. If $x$ or $y$ is in $\mathbb R\setminus S$, then the argument is as in Lemma \ref{lem:sequencetozero}. We now assume that $x,y\in S$.  We have three cases:
\begin{description}
	\item[\rm Case ($x=0$)] Then $|y| = 1/(n-1)$ for some $n$. Hence, $U_n$ separates $x$ and $y$.
	\item[\rm Case ($x=1/n, y=-1/n$)] Then $U_n$ separates $x$ and $y$.
	\item[\rm Case ($|x|=1/n,|y|=1/m,n< m$)] Then $U_{m+1}$ does not contain $\pm 1/m=\pm y$ but contains $\pm 1/n=\pm x$.
\end{description}
The proof is complete.
\end{proof}

\par\bigskip\noindent
Note that $\{U_n\}_n$'s constructed in Lemmas \ref{lem:sequencetozero} and \ref{lem:twosequencestozero} are  minimal point-separating families of functionally open sets of $\mathbb R$. The participating sequences can be replaced by any non-trivial convergent sequences as well as $\mathbb R$ can be replaced by any non-discrete second-countable space.

\par\bigskip\noindent
Either of the above two lemmas imply the following:
\par\bigskip\noindent
\begin{cor}\label{pro:separablemetric}
 Any separable metric space $X$ has a minimal separating family of functionally open  sets.
\end{cor}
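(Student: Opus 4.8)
The plan is first to reduce the problem to producing a minimal point-separating family of \emph{open} sets, since in a metric space every open set is a cozero set and hence functionally open, and then to treat separately the two cases in which $X$ is, or is not, discrete.

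If $X$ is discrete it is countable; writing $X=\{x_n:n\ge1\}$ (the finite and singleton cases being trivial) I would take $U_n=\{x_1,\dots,x_n\}$. Each $U_n$ is clopen, the family $\{U_n:n\ge1\}$ separates points (if $i<j$ then $x_i\in U_i$ and $x_j\notin U_i$), and it is minimal because $U_n$ is the only member that separates $x_n$ from $x_{n+1}$: every $U_m$ with $m<n$ omits both of these points and every $U_m$ with $m\ge n+1$ contains both.

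If $X$ is not discrete, fix a non-isolated point $p$ and, using first countability, choose distinct points $p_n\to p$ with $p_n\ne p$, so that $S=\{p\}\cup\{p_n:n\ge1\}$ is a non-trivial convergent sequence and is therefore compact, hence closed, in $X$. I would then repeat the proof of Lemma~\ref{lem:sequencetozero} almost verbatim, with $\mathbb R$ replaced by $X$, the sequence $\{1/n\}$ by $\{p_n\}$, and the point $0$ by $p$: using that $X$ is second countable and metrizable, fix a countable family $\mathcal P=\{\langle A_n,B_n\rangle:n\ge1\}$ of pairs of open sets satisfying the analogues of P1--P3 (closures of $A_n,B_n$ disjoint from $S$ and from each other, and every two distinct points of $X\setminus S$ realized as $x\in A_n$, $y\in B_n$), put $O_n=X\setminus(\bar A_n\cup\bar B_n)$ and $U_n=[O_n\setminus\{p_n\}]\cup A_n$; this $U_n$ is open, hence functionally open. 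Since $U_n\cap S=S\setminus\{p_n\}$, the family $\{U_n:n\ge1\}$ is point-separating (the same three cases as in Lemma~\ref{lem:sequencetozero}: both points off $S$, one off and one on $S$, both on $S$) and each $U_n$ is the unique member that separates $p$ from $p_n$, so the family is minimal. Equivalently, one may just appeal to the remark following Lemma~\ref{lem:twosequencestozero}, where exactly this generalization is recorded.

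I expect the only genuine work to be in the non-discrete case, and within it the single point that is not a mechanical transcription of Lemma~\ref{lem:sequencetozero} is the construction of the auxiliary family $\mathcal P$ inside an arbitrary separable metric space; this is where metrizability enters --- points of the open set $X\setminus S$ are separated by small balls whose closures miss $S$, and a countable base then furnishes the required countably many pairs --- but it is routine, and the verifications of point-separation and of minimality transfer without change.
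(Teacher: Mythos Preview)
Your proposal is correct and follows essentially the same route as the paper: the corollary is deduced from the construction of Lemma~\ref{lem:sequencetozero} together with the remark (which you cite) that the argument carries over verbatim to any non-discrete second-countable space, and you add the easy explicit treatment of the countable discrete case that the paper leaves implicit.
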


\par\bigskip\noindent
We are now ready to prove the reverse of  of Theorem \ref{thm:topologyToCp}.
\par\bigskip\noindent

\begin{thm}\label{thm:CpToTopology}
Let $X$ have  a minimal separating function set. Then $X$ has a minimal separating collection of functionally open set.
\end{thm}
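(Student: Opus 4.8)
The plan is to take a minimal separating $F\subseteq C_p(X)$ with $|F|=\tau$ and replace each $f\in F$ by a countable family of cozero sets, assembling a minimal separating family of functionally open sets of $X$. First I would dispose of the finite case: if $\tau<\aleph_0$ then the evaluation map associated with $F$ embeds $X$ into $\mathbb R^{\tau}$, so $X$ is separable metrizable and Corollary \ref{pro:separablemetric} already gives the conclusion; hence assume $\tau\ge\aleph_0$. Then I invoke Proposition \ref{pro:discreteinsquare} together with the construction in its proof to fix $D=\{\langle x_f,y_f\rangle:f\in F\}$, where $x_f\neq y_f$, $f(x_f)\neq f(y_f)$, and — the property that carries the whole argument — $g(x_f)=g(y_f)$ for every $g\in F\setminus\{f\}$. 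The consequence I want from this is that, in any candidate family all of whose members are pullbacks along functions of $F$, no set pulled back along a $g\neq f$ can separate $\langle x_f,y_f\rangle$; this is precisely what will make each block ``self-contained'' and force minimality.

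Next, for each $f$ consider $Y_f=f(X)\subseteq\mathbb R$, which is second countable (hence separable metrizable, so every open subset of $Y_f$ is functionally open), and carries the distinct points $p_f=f(x_f)$, $q_f=f(y_f)$. I would fix a countable minimal point-separating family $\mathcal V_f$ of (functionally) open subsets of $Y_f$ with the extra feature that the private pair of each member of $\mathcal V_f$ lies in
$\Lambda_f=\{\langle f(a),f(b)\rangle : a\neq b,\ g(a)=g(b)\text{ for all }g\in F\setminus\{f\}\}$,
a symmetric subset of $Y_f^{2}\setminus\Delta$ (if $f(a)=f(b)$ and all other $g$ agree on $a,b$ then $F$ fails to separate $a,b$) which contains $\langle p_f,q_f\rangle$. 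Producing such a $\mathcal V_f$ is where Lemmas \ref{lem:sequencetozero} and \ref{lem:twosequencestozero} enter, via the remark that $\mathbb R$ and the model sequences there may be replaced by an arbitrary non-discrete second countable space and arbitrary non-trivial convergent sequences (the remaining discrete case being handled by the elementary fact that a countable discrete space has a minimal separating family in which every member privately separates the base point from exactly one other point). Since each $V\in\mathcal V_f$ equals $Y_f\cap\widetilde V$ for some open $\widetilde V\subseteq\mathbb R$, the set $U_{f,V}:=f^{-1}(\widetilde V)$ is a cozero subset of $X$; put $\mathcal U_f=\{U_{f,V}:V\in\mathcal V_f\}$.

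Then I would assemble $\mathcal U=\bigcup_{f\in F}\mathcal U_f$, a family of functionally open subsets of $X$ with $|\mathcal U|\le\tau\cdot\aleph_0=\tau$, and verify the two required properties. It is separating: given distinct $a,b\in X$, pick $f$ with $f(a)\neq f(b)$; as $\mathcal V_f$ separates $Y_f$, some $V\in\mathcal V_f$ separates $f(a)$ from $f(b)$, so $U_{f,V}$ separates $a$ from $b$. It is minimal: fix $U_{f,V}\in\mathcal U$, let $\langle u,v\rangle\in\Lambda_f$ be the private pair of $V$ in $\mathcal V_f$, and choose $a,b\in X$ with $f(a)=u$, $f(b)=v$, and $g(a)=g(b)$ for all $g\neq f$. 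Then no member of $\mathcal U_g$ with $g\neq f$ separates $\langle a,b\rangle$ (because $g(a)=g(b)$), and among $\mathcal U_f$ only $U_{f,V}$ separates $\langle a,b\rangle$ (because $V$ is the unique member of $\mathcal V_f$ separating $u$ from $v$); hence $\langle a,b\rangle$ is separated by $U_{f,V}$ and by no other member of $\mathcal U$, so $U_{f,V}$ is essential. This makes $\mathcal U$ a minimal separating family of functionally open sets.

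The whole weight of the proof sits in the construction of $\mathcal V_f$: one must manufacture a minimal point-separating family on the second countable space $Y_f$ whose private pairs are all confined to $\Lambda_f$. The danger — and what I expect to be the main obstacle — is that a priori $\Lambda_f$ could be too thin (e.g.\ just $\{\langle p_f,q_f\rangle\}$) to witness minimality of a family that separates all of $Y_f$; overcoming it amounts to showing that the closed discreteness of $D$ in $X^2\setminus\Delta$ forces $\Lambda_f$ to carry inside $Y_f$ exactly the ``convergent sequence with its limit'' (or ``two sequences to a common limit'') configuration demanded by Lemmas \ref{lem:sequencetozero}–\ref{lem:twosequencestozero}. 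This is the point at which the ``visual property of $X^2$'' advertised in the abstract does its job; once the interface between $\Lambda_f$ and those lemmas is established, everything above is routine bookkeeping.
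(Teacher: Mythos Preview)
Your outline is close to the paper's strategy, but it breaks down at exactly the point you flag as ``the danger,'' and the hoped-for rescue via closed discreteness of $D$ does not materialise. Minimality of $F$ supplies \emph{one} witness pair $\langle x_f,y_f\rangle$ per function $f$; nothing more. The set $\Lambda_f$ may well reduce to the single (unordered) pair $\{f(x_f),f(y_f)\}$, while $Y_f=f(X)$ is infinite and hence requires an infinite minimal separating family of open sets, each demanding its own private pair inside $\Lambda_f$. The closed discreteness of $D=\{\langle x_f,y_f\rangle:f\in F\}$ is a statement about how the witness pairs for \emph{different} $f$'s accumulate in $X^2$; it says nothing about the size or structure of $\Lambda_f$ for a fixed $f$, so it cannot force $\Lambda_f$ to carry a convergent-sequence configuration in $Y_f$. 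In short, working one $f$ at a time you have a single witness pair but need countably many.

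The paper resolves this mismatch by \emph{bundling}: partition $F$ into countably infinite blocks $F_\alpha$ and replace your per-$f$ map by the diagonal $h_\alpha=\Delta\{f:f\in F_\alpha\}$ into $\mathbb R^{F_\alpha}$. Now the block carries countably many witness pairs $\{\langle x_f,y_f\rangle:f\in F_\alpha\}$, one for each $f\in F_\alpha$, and their $h_\alpha$-images live in the second-countable space $h_\alpha(X)$. Compactness of $h_\alpha(X)$ (this is where compactness of $X$ is used) plus the closed discreteness of $D$ in $X^2\setminus\Delta_X$ force a subsequence of these images to converge to a diagonal point, producing precisely the sequence-to-a-limit (or two-sequences-to-a-common-limit) picture required by Lemmas~\ref{lem:sequencetozero}--\ref{lem:twosequencestozero}. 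The resulting $\mathcal U_\alpha$ is separating for $h_\alpha(X)$ and each of its members is privately witnessed by some $\langle x_f,y_f\rangle$ with $f\in F_\alpha$; since no $g\notin F_\alpha$ separates that pair either, the global union $\bigcup_\alpha\mathcal U_\alpha$ is minimal. Your bookkeeping in the last two paragraphs then goes through verbatim with $h_\alpha$ in place of $f$.
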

\begin{proof}
Fix a minimal separating function set $F\subset C_p(X)$. By Corollary \ref{pro:separablemetric}, we may assume that $F$ is  infinite. Partition $F$ into $\{F_\alpha:\alpha<|F|\}$ so that each $F_\alpha$ is countably infinite.

\par\smallskip\noindent
For each $\alpha<|F|$ we will define $\mathcal U_\alpha$ so that $\mathcal U=\cup \{\mathcal U_\alpha:\alpha<|F|\}$ will be a minimal point-separating family of functionally open subsets of $X$.  First, for each $f\in F$, fix $x_f$ and $y_f$ in  $X$ that are separated by $f$ and no any other member of $F$.

\par\medskip\noindent
\underline {\it Construction of $\mathcal U_\alpha$:} Put $h_\alpha = \Delta \{f: f\in F_\alpha\}$. Since $h_\alpha(X)$ is compact, there exists $\{f_n:n\in \omega\}\subset F_\alpha$ and $L\in h_\alpha(X)$ such that $\lim\limits_{n\to \infty} x_{f_n} = L$. By the argument of Proposition \ref{pro:discreteinsquare}, the set $\{\langle x_f,y_f\rangle:f\in F\}$ is closed and discrete in $(X\times X)\setminus \Delta_X$. Therefore,  $\lim\limits_{n\to \infty} y_{f_n} = L$ too. Since $x_f\not = y_f$ for each $f$ we may assume that $\{x_{f_n}:n\in \omega\}\cap \{y_{f_n}:n\in \omega\}=\emptyset$ and that no $y_{f_n}$ is equal to $L$. We then have two cases:
\par\medskip\noindent
{\it Case 1.} $x_{f_n}=L$ for all $n$;
\par\noindent
{\it Case 2.} $x_{f_n}\not = x_{f_m}\not = L$ for all $n\not = m$.
\par\medskip\noindent
 Independently on which case takes place, by Lemmas \ref{lem:sequencetozero} and \ref{lem:twosequencestozero}, we can find a collection $\mathcal O_\alpha=\{O_\alpha^n:n\in \omega\}$ of open sets in $h_\alpha(X)$ that separates points of $h_\alpha(X)$ so that only $O^n_\alpha$ separates $x_{f_n}$ from $y_{f_n}$. Put $\mathcal U_\alpha=\{h^{-1}(O):O\in \mathcal O_\alpha\}$.

\par\medskip
Let us show that $\mathcal U=\cup \{\mathcal U_\alpha:\alpha<|F|\}$ is point separating and minimal with respect to this property.

To show that $\mathcal U$ is point separating, fix distinct $x,y\in X$. Since $F$ is a point separating function set, there exists $f\in F_\alpha\subset F$ that separates $x$ and $y$. Then $h_\alpha(x)\not = h_\alpha(y)$. Since $\mathcal O_\alpha$ is a point separating family of open subsets of $h_\alpha(X)$, there exists $O\in \mathcal O_\alpha$ that contains exactly one of  $h_\alpha(x)$ and $h_\alpha(y)$. Hence, $h^{-1}_\alpha(O)\in \mathcal U_\alpha\subset \mathcal U$ separates $x$ and $y$.

To show  minimality of $\mathcal U$ with respect to being point separating, fix $U\in \mathcal U$. Then $U=h^{-1}(O)$ for some $O\in \mathcal O_\alpha$. Then there exists $f\in F_\alpha$ such that $O$ is the only element that separates $x_f$ and $y_f$. 
Since $f$ is the only function in $F$ that separates $x_f$ and $y_f$, we conclude that $U$ is the only element of $\mathcal U$ that separates $x_f$ and $y_f$.
\end{proof}

\par\bigskip\noindent
Theorems \ref{thm:topologyToCp} and \ref{thm:CpToTopology} imply the promised equivalence.
\begin{thm}\label{thm:criteriongeneral}
A compact space $X$ has a minimal separating function set in $C_p(X)$ if and only if $X$ has  a minimal  separating family of  functionally open subsets in $X$.
\end{thm}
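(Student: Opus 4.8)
The plan is short, because the theorem is precisely the conjunction of the two implications already proved. For the direction ``$X$ has a minimal separating family of functionally open sets $\Rightarrow$ $X$ has a minimal separating function set in $C_p(X)$'' I would simply invoke Theorem \ref{thm:topologyToCp}; note that this half does not use compactness at all and holds for every Tychonoff space. For the converse, ``$X$ has a minimal separating function set in $C_p(X)$ $\Rightarrow$ $X$ has a minimal separating family of functionally open sets,'' I would invoke Theorem \ref{thm:CpToTopology}. That is where compactness of $X$ is actually needed: it guarantees that for each countable block $F_\alpha$ of the given minimal set, the diagonal map $h_\alpha=\Delta\{f:f\in F_\alpha\}$ has compact image in $\mathbb R^\omega$, so that the associated points $x_{f_n}$ accumulate, a convergent subsequence can be extracted, and the situation is reduced to the one-dimensional models of Lemmas \ref{lem:sequencetozero} and \ref{lem:twosequencestozero}.

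So the write-up amounts to one line: assume $X$ is compact; if $X$ carries a minimal separating family of functionally open sets, apply Theorem \ref{thm:topologyToCp}; if $X$ carries a minimal separating function set in $C_p(X)$, apply Theorem \ref{thm:CpToTopology}. Together these give both implications and hence the stated equivalence.

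Since all the real work has been pushed into the preceding propositions and lemmas, there is no genuine obstacle remaining at this stage. The only point deserving a second glance is bookkeeping on hypotheses: one should confirm that Theorem \ref{thm:CpToTopology} as stated already builds in the use of compactness (it does, via the compactness of $h_\alpha(X)$) and that Theorem \ref{thm:topologyToCp} is stated without side conditions (it is). With that verified, the compact case follows immediately; indeed the reverse implication of the equivalence is valid in full generality, and compactness is invoked only to make the forward implication go through.
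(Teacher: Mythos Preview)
Your proposal is correct and matches the paper's approach exactly: the paper also derives Theorem~\ref{thm:criteriongeneral} in one line by citing Theorems~\ref{thm:topologyToCp} and~\ref{thm:CpToTopology}. Your additional remark on where compactness enters (namely, in the proof of Theorem~\ref{thm:CpToTopology} via the compactness of $h_\alpha(X)$) is accurate and consistent with the paper's argument.
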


\par\bigskip
\section{Minimal Separating Function Sets}\label{S:minimalgeneral}

\par\bigskip\noindent
In this section the property of having a  minimal separating function sets will be often  referred to as {\it the property} and will be studies exclusively from the point of view of $C_p$-theory despite Theorem \ref{thm:criteriongeneral}. The rational behind this choice is that any such set is necessarily discrete in itself (Proposition \ref{pro:minimalthendiscrete}) and discrete separating sets have attracted  attention of many  $C_p$-enthusiasts. 
\par\bigskip\noindent
First observe that a space  can have minimal separating function sets of different cardinalities. 
Indeed, one can easily construct a one- and a two-element minimal separating function  families for  $X=\{1,2,3\}$. While cardinality is not a common property among minimal function sets, the absence of cluster points is.

\par\bigskip\noindent
\begin{pro}\label{pro:minimalthendiscrete}
Any minimal separating function set is discrete in itself.
\end{pro}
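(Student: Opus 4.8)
The plan is to use minimality to attach to each $f\in F$ a pair of points witnessing its indispensability, and then build around $f$ a standard basic neighborhood in the topology of pointwise convergence that no other member of $F$ can enter.

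First I would fix an arbitrary $f\in F$. Since $F$ is a minimal separating function set, exactly as in the proof of Proposition \ref{pro:discreteinsquare} there exist distinct $x_f,y_f\in X$ that are separated by $f$ and by no other member of $F$; that is, $f(x_f)\neq f(y_f)$, while $g(x_f)=g(y_f)$ for every $g\in F\setminus\{f\}$. Set $\epsilon=|f(x_f)-f(y_f)|/3>0$, and consider the basic open neighborhood of $f$ in $C_p(X)$ given by
\[
U=\{\,g\in C_p(X): |g(x_f)-f(x_f)|<\epsilon \text{ and } |g(y_f)-f(y_f)|<\epsilon\,\}.
\]
Clearly $f\in U$, since both defining inequalities hold trivially with value $0$.

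Next I would verify that $U\cap F=\{f\}$. Suppose $g\in F\cap U$. Then, by the triangle inequality,
\[
|g(x_f)-g(y_f)|\ge |f(x_f)-f(y_f)|-|g(x_f)-f(x_f)|-|g(y_f)-f(y_f)|> 3\epsilon-2\epsilon=\epsilon>0,
\]
so $g$ separates $x_f$ from $y_f$. But $f$ is the only member of $F$ that separates this pair, hence $g=f$. Since $f\in F$ was arbitrary, every point of $F$ is isolated in $F$, i.e. $F$ is discrete in itself.

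There is no real obstacle here: the argument is a direct application of the definition of minimality together with the definition of the topology of pointwise convergence, and the only point worth stating explicitly is the reuse of the pair-selection device from Proposition \ref{pro:discreteinsquare}. (One could even obtain the neighborhood $U$ from that proposition's argument verbatim, restricted to the coordinates $x_f$ and $y_f$.)
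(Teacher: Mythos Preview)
Your proof is correct and essentially identical to the paper's own argument: both pick a witnessing pair $x_f,y_f$ by minimality, set $\epsilon=|f(x_f)-f(y_f)|/3$, and use the same basic $C_p$-neighborhood to isolate $f$ in $F$. The only difference is that you spell out the triangle-inequality verification that the paper leaves implicit.
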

\begin{proof}
Let $F\subset C_p(X)$ be a minimal separating set. Fix any $f\in F$. Since $F$ is minimal, there exist distinct $x,y\in X$ that are separated by $f$ but not by any other member of $F$. Put $\epsilon = |f(x)-f(y)|/3$. Then $U_=\{g: |g(x)-f(x)|<\epsilon, |g(y)-f(y)|<\epsilon\}$ is an open neighborhood of $f$ that misses $F\setminus \{f\}$.
\end{proof}

\par\bigskip\noindent
The statement of \ref{pro:minimalthendiscrete} is the main reason we consider our study from the point of view of $C_p$-theory. In \cite{BO} it was proved that if $X$ is a zero-dimensional space of pseudoweight $\tau$ of uncountable cofinality, then $X$ has a discrete separating function set of size $\tau$  if and only if $X^n$ has a discrete set of size $\tau$ for some $n$. This and Proposition \ref{pro:discreteinsquare} prompt the following question.

\par\bigskip\noindent
\begin{que}\label{que:zerodimcriterion}
Let $X$ be a zero-dimensional space of (pseudo)weight $\tau$ and let $(X\times X)\setminus \Delta_X$ has a closed discrete subspace of size $\tau$. Does $C_p(X)$ have a minimal separating set?
\end{que}

\par\bigskip\noindent
This observation naturally prompts a question of whether our property is equivalent to having a discrete separating function set.
We will next identify  a naive example among well-known  spaces that does not have the property under discussion but has a discrete function separating set.

\par\bigskip\noindent
\begin{ex}\label{ex:betaomega}
The space $\beta \omega$ has a discrete  separating function set  but no minimal separating function set.
\end{ex} 
\begin{proof} Recall that $\beta\omega$ is a zero-dimensional compactum that has a $2^\omega$-sized separating family of clopen sets and a discrete in itself subset of cardinality $2^\omega$. By \cite[Theorem 2.8 and Corollary 2.19]{BO}, $\beta\omega$ has a discrete separating function set. Since $(\beta \omega\times \beta\omega)\setminus \Delta_{\beta \omega}$ is countably compact, it cannot contain a discrete closed subset. Hence, by Proposition \ref{pro:discreteinsquare}, $\beta \omega$ does not have a minimal separating function set.
\end{proof}

\par\bigskip\noindent
Now that we have established the existence of spaces without the property, it would be nice to isolated significant classes of spaces that have the property.  We have already proved that separable metric spaces have the property (Theorem \ref{thm:topologyToCp} and Corollary \ref{pro:separablemetric}). In fact, all metric spaces have the property. We will derive it from a more general statement that we will prove next.  For this, by $a(X)$  we denote the largest cardinal (if exists) for which there exists a closed subset $A\subset X$ that contains at most one non-isolated point. Recall that $w(X)$ denotes the weight of $X$.

\par\bigskip\noindent
\begin{thm}\label{thm:sufficient}
Let $X$ be a  normal space and $a(X)=w(X)$. Then $X$ has a minimal separating function set.
\end{thm}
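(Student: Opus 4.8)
The plan is to build a minimal separating family of functionally open subsets of $X$ and then invoke Theorem~\ref{thm:topologyToCp}. Fix $\tau=w(X)=a(X)$. By definition of $a(X)$ there is a closed set $A\subset X$ with at most one non-isolated point and $|A|=\tau$; write $A=\{x_\xi:\xi<\tau\}$ with the (possible) non-isolated point listed first as $x_0$ if it exists, and let $D=A\setminus\{x_0\}$, a discrete-in-itself set of isolated-in-$A$ points of size $\tau$. The rough idea is to use the $\tau$ points of $D$ as ``witnesses'': for each $\xi$ we want one distinguished functionally open set $U_\xi$ that is the only member of our family separating $x_\xi$ from some companion point, so that minimality is automatic, while still throwing in enough extra functionally open sets to separate all the remaining pairs of points of $X$.

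First I would handle the separation of all pairs by a standard weight argument: since $X$ is normal (hence Tychonov) with $w(X)=\tau$, there is a family $\mathcal V$ of functionally open sets with $|\mathcal V|=\tau$ that is separating for $X$. Then I would enlarge/modify $\mathcal V$ to a family $\mathcal U$ that is still separating, still of size $\tau$, but now minimal. The mechanism for injecting minimality is the one used in Lemma~\ref{lem:sequencetozero}: inside a convergent-sequence-like subset one can arrange a point-separating family in which a prescribed set is the unique separator of a prescribed pair. Concretely, using normality and the closed discrete-in-$A$ structure of $D$, I would, for each $\xi<\tau$, choose a companion point (e.g.\ $x_0$ if it exists, or else another point of $A$) together with a functionally open $U_\xi$ containing $x_\xi$ whose closure misses that companion and misses the analogous open sets used for the other indices in a suitably ``locally finite on $A$'' way; normality plus $|A|=\tau=w(X)$ is exactly what lets this global bookkeeping go through. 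One then replaces each $V\in\mathcal V$ by $V$ with the finitely many ``forbidden'' witness points deleted (as in the $O_n\setminus\{1/n\}$ trick), so that no $V$ interferes with the role of any $U_\xi$, and sets $\mathcal U=\{U_\xi:\xi<\tau\}\cup\{\text{the trimmed }V\text{'s}\}$. Checking that $\mathcal U$ is still separating is routine (the deletions are controlled), and minimality holds because removing $U_\xi$ loses the unique separator of $x_\xi$ from its companion, while removing a trimmed $V$ loses the separation it was responsible for in $\mathcal V$ (one should set things up so each trimmed $V$ also has a dedicated witness pair, again via the Lemma~\ref{lem:sequencetozero} device applied on a convergent sequence sitting on $D$).

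The case split is whether $A$ has a non-isolated point at all. If it does not, then $X$ itself contains a closed discrete set of size $\tau=w(X)$; this is the easier sub-case, since one can essentially mimic the discrete-space/$\beta\omega$-style constructions, distributing witness pairs among the isolated points and using normality to get the trimmed separating family. If $A$ does have a non-isolated point $x_0$, then $D$ accumulates (within $A$) only at $x_0$, and $D\cup\{x_0\}$ behaves like a convergent sequence (or a discrete sum of such) of length $\tau$; here the Lemma~\ref{lem:sequencetozero}/\ref{lem:twosequencestozero} constructions, transported along a continuous surjection onto a metric quotient of weight $\tau$, supply the required ``unique separator'' open sets. The metric corollary then falls out: a metric space has $w(X)=a(X)$ (a closed discrete subset of size $w(X)$ exists, or the space is separable and Corollary~\ref{pro:separablemetric} applies), so Theorem~\ref{thm:sufficient} gives the property for all metric spaces.

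The main obstacle I anticipate is the global coordination across all $\tau$ indices: making sure the chosen witness open sets $U_\xi$ (and the trimmed $V$'s) do not accidentally separate some other designated pair, so that uniqueness — hence minimality — really holds simultaneously for all of them. This is where normality and the hypothesis $a(X)=w(X)$ must be used in tandem: normality provides the functionally open sets with prescribed-disjoint closures, and the equality of cardinals ensures that the ``separating engine'' $\mathcal V$ and the ``minimality engine'' $D$ live at the same cardinality, so one can pair them up index-by-index without a cardinality mismatch forcing some $U_\xi$ to do double duty.
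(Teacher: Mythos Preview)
Your outline has the right intuition---use the closed set $S$ with at most one non-isolated point to manufacture witness pairs---but the execution has a real gap, precisely the ``global coordination'' you flag at the end. You propose a two-layer family $\mathcal U=\{U_\xi\}\cup\{\text{trimmed }V\text{'s}\}$, and you need \emph{every} member to be essential. For the trimmed $V$'s you say one should ``set things up so each trimmed $V$ also has a dedicated witness pair,'' but you never do so; and the trimming step (``delete the finitely many forbidden witness points'') is incorrect as stated: there are $\tau$ witness pairs, not finitely many, and nothing prevents a single $V$ from separating unboundedly many of them. Conversely, the $U_\xi$'s may well separate whatever pair you try to dedicate to a given $V$. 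So as written the family is separating but you have not shown it is minimal, and the sketched fix is circular.

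The paper avoids this entirely by collapsing the two layers into one. It builds functions directly (no detour through Theorem~\ref{thm:topologyToCp}): enumerate the disjoint closed pairs $\{A_\alpha,B_\alpha\}$ coming from a weight-sized closed network of $X\setminus S$ and the points $\{p_\alpha\}$ of $S\setminus\{p^*\}$ by the \emph{same} index set $\alpha<w(X)$, and use normality to get a single $f_\alpha$ with $f_\alpha\equiv 0$ on $S\setminus\{p_\alpha\}$, $f_\alpha(p_\alpha)=1$, $f_\alpha\equiv 1/3$ on $A_\alpha$, $f_\alpha\equiv 2/3$ on $B_\alpha$. Each $f_\alpha$ then does double duty: it handles the separating work for its pair $\{A_\alpha,B_\alpha\}$ \emph{and} it is automatically the unique function separating $p^*$ from $p_\alpha$, since every $f_\beta$ with $\beta\ne\alpha$ sends both to $0$. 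No trimming and no two-family bookkeeping are needed; the coordination problem simply does not arise.
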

\begin{proof}
Let $S$ be a closed subset of $X$ with at most one non-isolated point and of size equal to $w(X)$. If $S$ has a non-isolated point, denote it  by $p^*$. Otherwise, give this name to an arbitrary point of $S$. Since $S$ is closed, we can fix a $w(X)$-sized family $\mathcal A$ of closed sets with the following properties:
\begin{description}
	\item[\rm A1] For any distinct $x,y\in X\setminus S$ there exist disjoint $A_x,A_y\in\mathcal A$ containing $x$ and $y$, respectively.
	\item[\rm A2] $\bigcup\mathcal A =X\setminus S$.
\end{description}
\par\smallskip\noindent
Let $\mathcal P$ be the set of all unordered pairs of disjoint elements of $\mathcal A$. Enumerate $\mathcal P$ as $\{\{A_\alpha,B_\alpha\}:\alpha<w(X)\}$ and enumerate $S\setminus \{p^*\}$ as $\{p_\alpha:\alpha<w(X)\}$.
Next for each $\alpha<w(X)$, fix a continuous function $f_\alpha:X\to \mathbb R$ that has the following properties:
\begin{description}
	\item[\rm F1] $f_\alpha(S\setminus \{p_\alpha\}) = \{0\}$,
	\item[\rm F2] $f_\alpha(p_\alpha) = 1$,
	\item[\rm F3] $f_\alpha(A_\alpha) = \{1/3\}$,
	\item[\rm F4] $f_\alpha(B_\alpha)=\{2/3\}$.
\end{description}
Such a function exists because $A_\alpha, B_\alpha, S\setminus \{p_\alpha\}$, and $\{p_\alpha\}$ are mutually disjoint closed subsets of a normal space. By A1-A2 and F1-F4, $\{f_\alpha\}_\alpha$ is separating.  By F1-F2, $f_\alpha$ is the only function that separates $p^*$ from $p_\alpha$. Hence, $\{f_\alpha\}_\alpha$ is a minimal separating family of functions.
\end{proof}
\par\bigskip\noindent
\begin{cor}\label{cor:metric}
Every metric space has a minimal separating function set.
\end{cor}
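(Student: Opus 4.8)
The plan is to derive this from Theorem~\ref{thm:sufficient}. Every metrizable space is normal, so it suffices to check that $a(X)=w(X)$ for an arbitrary infinite metric space $X$; write $\kappa=w(X)$. One inequality is general and routine: if $A\subseteq X$ is closed and has at most one non-isolated point $p$, then each $a\in A\setminus\{p\}$ has an open neighbourhood $U_a$ with $U_a\cap A=\{a\}$, and selecting a basic open set $W_a$ with $a\in W_a\subseteq U_a$ embeds $A\setminus\{p\}$ injectively into a base of $X$; hence $|A|\le w(X)$, so $a(X)\le w(X)$. The real task is the reverse inequality: to find a closed subset of $X$ of cardinality $\kappa$ with at most one non-isolated point.

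I would argue by cases on $\kappa$. If $\kappa=\aleph_0$, then $X$ is separable metrizable, and either $X$ contains an infinite closed discrete subspace or $X$ is countably compact, hence compact, hence contains a nontrivial convergent sequence; in both cases one gets a countable closed set with at most one non-isolated point (alternatively this case follows from Corollary~\ref{pro:separablemetric} and Theorem~\ref{thm:topologyToCp}). If $\kappa>\aleph_0$, fix a $\sigma$-discrete base $\mathcal B=\bigcup_n\mathcal B_n$ of $X$ with $|\mathcal B|=\kappa$; since each $\mathcal B_n$ is a discrete family of nonempty open sets, a selector from $\mathcal B_n$ is a closed discrete subspace of size $|\mathcal B_n|$, and $\kappa=\sup_n|\mathcal B_n|$. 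In particular, if $\kappa$ is a successor cardinal or $\operatorname{cf}(\kappa)>\omega$, then $|\mathcal B_n|=\kappa$ for some $n$, and we even obtain a closed discrete subset of size $\kappa$.

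What is left --- and this is the heart of the matter --- is the case $\operatorname{cf}(\kappa)=\omega$; write $\kappa=\sup_k\kappa_k$ with each $\kappa_k$ a successor cardinal. Here no $\mathcal B_n$ need have size $\kappa$, and there need not even be a single $\varepsilon>0$ with an $\varepsilon$-separated set of size $\kappa$ (for a topological sum of copies of the hedgehog $J(\kappa_k)$ rescaled to diameter $1/k$, every $\varepsilon$-separated set has size $<\kappa$). My approach is a dichotomy on local weight. Put $Y=\{x\in X: w(B(x,r))=\kappa\text{ for every }r>0\}$. If $Y\ne\emptyset$, fix $p\in Y$; for each $k$ the closed ball $\overline B(p,1/k)$ is a closed subspace of $X$ of weight $\kappa>\kappa_k$, so --- by the $\sigma$-discrete-base argument applied inside it --- it contains a set $D_k$ which is closed in $X$ and discrete, with $p\notin D_k$, $|D_k|\ge\kappa_k$, and $D_k\subseteq B(p,2/k)$. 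Then $A=\{p\}\cup\bigcup_k D_k$ has cardinality $\ge\kappa$ and is closed: a nontrivial convergent sequence in $A$ either meets some $D_k$ infinitely often, forcing its limit into $D_k\subseteq A$, or meets each $D_k$ finitely often, forcing its limit to be $p$. For the same reason no point of $\bigcup_k D_k$ is a limit point of $A$, so $p$ is the unique non-isolated point of $A$, and we are done in this case.

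If instead $Y=\emptyset$, then $X$ is covered by open sets of weight $<\kappa$; I would pass to a locally finite open refinement $\mathcal W$ of such a cover (metric spaces are paracompact), so $w(W)<\kappa$ for every $W\in\mathcal W$. Since each point of $X$ lies in only finitely many members of $\mathcal W$, a selector from $\mathcal W$ is always a closed discrete subspace of $X$ of cardinality $|\mathcal W|$; so if $|\mathcal W|\ge\kappa$ we are finished. Otherwise $\sup_{W\in\mathcal W}w(W)=\kappa$, so one can pick $V_k\in\mathcal W$ with $w(V_k)\ge\kappa_k$, extract from each $V_k$ (using that $\kappa_k$ is a successor) a subset $D_k$ which is closed in $X$, discrete, contained in $V_k$, and of size $\ge\kappa_k$, and then use local finiteness of $\mathcal W$ to see that $\bigcup_k D_k$ is closed discrete in $X$. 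The delicate point of the whole proof is this last subcase: one must verify that each $V_k$ really carries such a $D_k$ whose closure does not run into the boundary of $V_k$ (a ``shell'' argument with the distance to $X\setminus V_k$) and that the union does not acquire a limit point when the $V_k$ overlap (this is exactly where local finiteness is used). Once $a(X)=w(X)$ is established, Theorem~\ref{thm:sufficient} delivers the corollary.
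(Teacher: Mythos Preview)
Your approach is exactly the one intended in the paper: the corollary is meant to follow immediately from Theorem~\ref{thm:sufficient}, using that metric spaces are normal and satisfy $a(X)=w(X)$. The paper gives no further argument, so you have in fact supplied considerably more detail than the original---a full verification that a metric space of weight $\kappa$ contains a closed subspace of size $\kappa$ with at most one non-isolated point, including the genuinely delicate case $\operatorname{cf}(\kappa)=\omega$. Your case analysis (the local-weight dichotomy, the shell argument inside the $V_k$, and the use of local finiteness to glue the $D_k$) is sound; the only cosmetic points are that you may as well take the $\kappa_k$ strictly increasing so that each $W\in\mathcal W$ can serve as $V_k$ for only finitely many $k$, and that one can trim each $D_k$ to size exactly $\kappa_k$ so that $|A|=\kappa$ on the nose (though the general bound $a(X)\le w(X)$ already forces this).
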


\par\bigskip\noindent
Our next goal is to investigate the behavior of the property within standard structures and under standard operations. First, we will show that any space can be embedded into a space with the property as a closed subset or even as clopen subset. For our next statement, recall that $XX'$ is a standard notation for the Alexandroff double $XX'$ of a  space $X$.

\par\bigskip\noindent
\begin{pro}\label{pro:double}
The Alexandroff double of any  infinite space $X$ has a minimal separating set of functions of cardinality $|X|$.
\end{pro}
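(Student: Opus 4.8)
The plan is to build the required family by hand, indexed by the points of $X$, and to use the ``twin'' structure of the Alexandroff double to force minimality almost for free. Write $\kappa=|X|$ (an infinite cardinal) and recall that $XX'$ has underlying set $X\sqcup X'$, where $X'=\{x':x\in X\}$ is a disjoint copy of $X$, each $x'$ is isolated, $X$ carries its original topology and is closed in $XX'$, and a basic neighborhood of a point $x\in X$ has the form $(U\cup U')\setminus\{x'\}$ with $U$ an open neighborhood of $x$ in $X$ and $U'=\{u':u\in U\}$. Fix a bijective enumeration $X=\{x_\alpha:\alpha<\kappa\}$.

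First I would fix a ``base'' of separating functions on $X$ with large multiplicity. Since $X$ is Tychonov, for each unordered pair $\{x,y\}$ of distinct points pick $e_{\{x,y\}}\in C(X,[0,1])$ with $e_{\{x,y\}}(x)\ne e_{\{x,y\}}(y)$; the set $E$ of all these functions is point-separating and has cardinality at most $|X|^2=\kappa$. Choose a surjection $\sigma\colon\kappa\to E$ all of whose fibers have cardinality $\kappa$, and put $g_\alpha=\sigma(\alpha)$. The point of this re-indexing is that for any two distinct $x,y\in X$ the set $\{\alpha<\kappa:g_\alpha(x)\ne g_\alpha(y)\}$ has cardinality $\kappa$ (it contains $\sigma^{-1}(e_{\{x,y\}})$), so it stays nonempty after deleting the at most two indices $\alpha$ for which $x_\alpha\in\{x,y\}$.

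With these ingredients I would then define $F_\alpha\colon XX'\to\mathbb R$ by $F_\alpha(x)=g_\alpha(x)$ for $x\in X$, $F_\alpha(x')=g_\alpha(x)$ for $x\in X\setminus\{x_\alpha\}$, and $F_\alpha(x_\alpha')=2$, and verify four things. (i) \emph{Continuity of $F_\alpha$}: only the points of $X$ are at issue; at $x\in X$ one chooses $U\ni x$ on which $g_\alpha$ oscillates by less than $\epsilon$ and, when $x\ne x_\alpha$, shrinks $U$ so that $x_\alpha\notin U$ (possible since $X$ is $T_1$); on the basic neighborhood $(U\cup U')\setminus\{x'\}$ the function $F_\alpha$ then agrees with $g_\alpha$ applied to $X$-coordinates, since the only exceptional point $x_\alpha'$ has been excluded --- automatically when $x=x_\alpha$, and by the choice of $U$ otherwise. (ii) \emph{Cardinality}: $F_\alpha(x_\alpha')=2$ while $F_\beta(x_\alpha')=g_\beta(x_\alpha)\le1$ for $\beta\ne\alpha$, so the $F_\alpha$ are pairwise distinct and there are exactly $\kappa$ of them. (iii) \emph{Separation}: the twins $x_\alpha,x_\alpha'$ are separated by $F_\alpha$ (as $g_\alpha(x_\alpha)\le1<2$); two points with distinct $X$-coordinates $x\ne y$ are separated by $F_\alpha$ for any $\alpha$ with $g_\alpha(x)\ne g_\alpha(y)$ and $x_\alpha\notin\{x,y\}$, which exists by the multiplicity arranged in the previous paragraph. (iv) \emph{Minimality}: for each $\alpha$, $F_\alpha$ is the \emph{only} member of the family separating $x_\alpha$ from $x_\alpha'$, because for $\beta\ne\alpha$ we have $x_\alpha\ne x_\beta$, hence $F_\beta(x_\alpha)=g_\beta(x_\alpha)=F_\beta(x_\alpha')$.

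The one step needing genuine care is (i): the spike of value $2$ at the isolated point $x_\alpha'$ must not destroy continuity of $F_\alpha$ at its twin $x_\alpha$ or at any other point of $X$, and this is exactly what forces $x_\alpha'$ to be kept out of the relevant basic neighborhoods. Everything else is bookkeeping, and in particular nothing beyond ``$X$ is an infinite Tychonov space'' enters; in conjunction with the fact that $X\cong X\times\{0\}$ is a closed subspace of $XX'$, this already shows that the property is not inherited by closed subspaces, since by Example \ref{ex:betaomega} the space $\beta\omega$ carries no minimal separating function set while, by the present proposition, $\beta\omega\,(\beta\omega)'$ does.
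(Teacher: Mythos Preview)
Your proof is correct and follows essentially the same construction as the paper: extend a separating family $\{g_\alpha\}$ on $X$ to $XX'$ by letting each $F_\alpha$ agree with $g_\alpha$ on all of $XX'$ except at the single isolated twin $x_\alpha'$, where it is given a spike value, so that the pair $(x_\alpha,x_\alpha')$ witnesses minimality; the re-indexing with large fibers to avoid the two bad indices is exactly the paper's device of listing each $f$ ``infinitely many times.'' The only differences are cosmetic---you use the spike value $2$ rather than $f_\alpha(x_\alpha)+1$, and you supply the continuity verification that the paper leaves as ``clearly.''
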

\begin{proof}
Put  $\tau=|X|$. Since $X$ is Tychonoff and $|X|=\tau$, there exists a separating family $F\subset C_p(X)$ that has cardinality $\tau$. Arrange elements of $F$ into a sequence $\{f_\alpha: \alpha<\tau\}$ so that every $f$ appears in the sequence infinitely many times. Enumerate elements of $X$ as $\{x_\alpha:\alpha<\tau\}$. Next, for each $\alpha<\tau$, define $g_\alpha:XX'\to\mathbb R$ as follows:
$$
g_\alpha(p) = \left\{
       	 \begin{array}{ll}
           	 f_\alpha(p) & p \in  X\\
            	 f_\alpha(x_\beta)& p=x_\beta' \ for\ some\  \beta\not = \alpha\\
	  	 f_\alpha(x_\alpha)+1 & p=x_\alpha'
        	\end{array}
   		 \right.
$$

Clearly, $g_\alpha$ is continuous and coincides with $f_\alpha$ on $X$. Let us show that $G=\{g_\alpha:\alpha<\tau\}$ is a minimal separating family of functions for $XX'$.

To prove that $G$ separates points of $XX'$, fix arbitrary distinct $p,q\in XX'$. Either both points are in $X$, or both are in $X'$, or one is in $X$ and the other is in $X'$. Let us consider these three cases separately.
\begin{description}
	\item[\rm Case ($p, q\in X$)] The conclusion follows from the facts that  $F$ separates points of $X$ and that $g_\alpha$'s are extensions of $f_\alpha$'s.
	\item[\rm Case ($p, q\in X'$)] There exist distinct $\alpha,\beta$ such  that $p=x_\alpha',q=x_\beta'$. Since $F$ separates points of $X$, there exists $\gamma$ such that $f_\gamma(x_\alpha)\not = f_\gamma(x_\beta)$. Since $f$ appears in the enumeration infinitely many times, we may assume that $\gamma\not \in \{\alpha, \beta\}$. Then $g_\gamma(p)\not = g_\gamma(q)$.
	\item[\rm Case ($p\in X, q\in X'$)] There exist  $\alpha,\beta$ such  that $p=x_\alpha,q=x_\beta'$. If $\alpha=\beta$, then $g_\alpha$ separates $p$ and $q$. Otherwise, we proceed as in Case 2.
\end{description}
Thus, $G$ is point-separating. Finally, only $g_\alpha$ separates $x_\alpha$ from $x_\alpha'$. Therefore, $G$ is minimal.
\end{proof}

\par\bigskip\noindent
An argument analogous to that of Proposition \ref{pro:double} proves  the following statement.
\par\bigskip\noindent
\begin{pro}\label{pro:doublediscrete}
Let $X$ be an infinite space and let $D_X$ be a discrete space of cardinality $|X|$. Then $X\oplus D_X$ has a minimal separating set of functions of cardinality $|X|$.
\end{pro}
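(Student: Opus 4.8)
The plan is to mimic the proof of Proposition~\ref{pro:double} almost verbatim, with the second copy of $X$ inside the Alexandroff double played now by the discrete summand $D_X$. Put $\tau=|X|$. Since $X$ is an infinite Tychonov space, I would fix a separating family $F\subset C_p(X)$ of cardinality $\tau$ and list its members as a sequence $\{f_\alpha:\alpha<\tau\}$ in which each element of $F$ occurs infinitely many times; then enumerate $X=\{x_\alpha:\alpha<\tau\}$ and $D_X=\{d_\alpha:\alpha<\tau\}$ without repetitions.

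Next I would define, for each $\alpha<\tau$, a function $g_\alpha\colon X\oplus D_X\to\mathbb R$ by
$$
g_\alpha(p)=\begin{cases}
 f_\alpha(p)& \text{if } p\in X,\\
 f_\alpha(x_\beta)& \text{if } p=d_\beta \text{ and } \beta\neq\alpha,\\
 f_\alpha(x_\alpha)+1& \text{if } p=d_\alpha.
\end{cases}
$$
Because $X$ and $D_X$ are clopen in $X\oplus D_X$, because $g_\alpha$ agrees with the continuous function $f_\alpha$ on $X$, and because every real-valued function on the discrete space $D_X$ is continuous, each $g_\alpha$ lies in $C_p(X\oplus D_X)$. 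I claim $G=\{g_\alpha:\alpha<\tau\}$ is the desired family. Minimality is the easy half: for a fixed $\alpha$ one has $g_\alpha(x_\alpha)=f_\alpha(x_\alpha)\neq f_\alpha(x_\alpha)+1=g_\alpha(d_\alpha)$, whereas for $\gamma\neq\alpha$ one has $g_\gamma(x_\alpha)=f_\gamma(x_\alpha)=g_\gamma(d_\alpha)$; hence $g_\alpha$ is the only member of $G$ that separates the pair $\langle x_\alpha,d_\alpha\rangle$. In particular the $g_\alpha$ are pairwise distinct, so $|G|=\tau$, and $G$ admits no proper separating subfamily.

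It remains to check that $G$ separates points, and here I would split into the same three cases as in Proposition~\ref{pro:double}. If $p,q\in X$, then some $f_\gamma$, hence $g_\gamma$, separates them since $F$ is separating. If $p=d_\alpha$ and $q=d_\beta$ with $\alpha\neq\beta$, then $x_\alpha\neq x_\beta$, so some $f_\gamma$ separates $x_\alpha$ from $x_\beta$; since this function recurs infinitely often I may assume $\gamma\notin\{\alpha,\beta\}$, and then $g_\gamma(p)=f_\gamma(x_\alpha)\neq f_\gamma(x_\beta)=g_\gamma(q)$. If $p=x_\alpha\in X$ and $q=d_\beta\in D_X$, then $g_\alpha$ itself separates them when $\alpha=\beta$, and when $\alpha\neq\beta$ I again use recurrence to pick an index $\gamma\neq\beta$ with $f_\gamma(x_\alpha)\neq f_\gamma(x_\beta)$, whence $g_\gamma(p)=f_\gamma(x_\alpha)\neq f_\gamma(x_\beta)=g_\gamma(q)$. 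The only point that needs care is this last mixed case, where the "each function appears infinitely often" device is used to steer clear of the shifted value $g_\alpha(d_\alpha)$, exactly as in Proposition~\ref{pro:double}; beyond faithfully transcribing that argument I do not expect a genuine obstacle.
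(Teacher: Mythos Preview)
Your proposal is correct and is precisely the argument the paper has in mind: the paper does not spell out a proof of this proposition but simply states that ``an argument analogous to that of Proposition~\ref{pro:double}'' works, and your adaptation---replacing the twin points $x_\alpha'$ of the Alexandroff double by the discrete points $d_\alpha$ and otherwise copying the construction and case analysis verbatim---is exactly that analogous argument.
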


\par\bigskip\noindent 
Propositions \ref{pro:double} and \ref{pro:doublediscrete} may create an illusion that any kind of doubling guarantees the presence of the studied property in the resulting "double-space". However, the most basic doubling, namely $X\times \{0,1\}$ need not guarantee any such "improvement". Indeed, $\beta\omega\times \{0,1\}$ is homeomorphic to itself, and therefore, does not have the property by  Example \ref{ex:betaomega}. This observation prompts the following.

\par\bigskip\noindent
\begin{que}
Let $X\times \{0,1\}$ have a minimal separating function set. Does $X$ have such a set?
\end{que}

\par\bigskip\noindent
Statements of Example \ref{ex:betaomega} and Proposition \ref{pro:doublediscrete}  imply that the property is not inherited by closed, open, and even clopen subspaces.  The following question may still be answered in affirmative.

\par\bigskip\noindent
\begin{que}
Let $X$ have a minimal separating family of functions. Does every  open dense subset have the property?
\end{que} 

\par\bigskip\noindent
Our next observation balances some of the negative statements above.

\par\bigskip\noindent
\begin{thm}\label{thm:product}
Let $X_\alpha$ have a minimal separating function set for each $\alpha\in A$. Then, $\prod_{\alpha\in A}X_\alpha$ has a minimal separating function set.
\end{thm}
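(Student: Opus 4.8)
The plan is to build the separating set for the product directly from the given data on the factors, pulled back along the coordinate projections. For each $\alpha\in A$ fix a minimal separating set $F_\alpha\subset C_p(X_\alpha)$ and, invoking minimality exactly as in the argument of Proposition \ref{pro:discreteinsquare}, for each $f\in F_\alpha$ fix distinct points $x_f,y_f\in X_\alpha$ that are separated by $f$ and by no other member of $F_\alpha$. Let $\pi_\alpha:\prod_{\beta\in A}X_\beta\to X_\alpha$ denote the projection, fix once and for all a base point $z=(z_\beta)_{\beta\in A}$ in the product, and put
$$G=\bigcup_{\alpha\in A}\{\,f\circ\pi_\alpha:f\in F_\alpha\,\}.$$
Every element of $G$ is continuous. (If some factor is a singleton its empty minimal separating set contributes nothing; if $A=\emptyset$ or all factors are singletons the product is a single point and the empty set is a minimal separating set, so we may assume the nondegenerate situation.)

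Next I would check that $G$ is point separating, which is immediate: if $p\neq q$ in the product, some coordinate $\alpha$ satisfies $\pi_\alpha(p)\neq\pi_\alpha(q)$, and since $F_\alpha$ is separating there is $f\in F_\alpha$ with $f(\pi_\alpha(p))\neq f(\pi_\alpha(q))$, that is, $f\circ\pi_\alpha\in G$ separates $p$ and $q$. The substance of the proof is minimality. Fix $u\in G$ and a representation $u=f\circ\pi_\alpha$ with $f\in F_\alpha$. Define $p_u,q_u$ in the product by $\pi_\beta(p_u)=\pi_\beta(q_u)=z_\beta$ for $\beta\neq\alpha$, while $\pi_\alpha(p_u)=x_f$ and $\pi_\alpha(q_u)=y_f$. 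Since $f(x_f)\neq f(y_f)$, the function $u$ separates $p_u$ and $q_u$. Now take any $v\in G\setminus\{u\}$, say $v=g\circ\pi_\beta$ with $g\in F_\beta$. If $\beta\neq\alpha$, then $\pi_\beta(p_u)=z_\beta=\pi_\beta(q_u)$, so $v$ does not separate them. If $\beta=\alpha$, then $g\neq f$ (otherwise $v=u$), hence $g$ is not the member of $F_\alpha$ that separates $x_f$ from $y_f$, so $g(x_f)=g(y_f)$ and again $v$ fails to separate $p_u$ and $q_u$. Therefore $u$ is the unique element of $G$ separating the pair $p_u,q_u$, and $G$ is a minimal separating function set for $\prod_{\alpha\in A}X_\alpha$.

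The whole argument is really just bookkeeping, so I do not anticipate a genuine obstacle; the one point worth stating carefully is that the last case split does \emph{not} require the pull-back assignment $(\alpha,f)\mapsto f\circ\pi_\alpha$ to be injective on $\bigsqcup_\alpha F_\alpha$. Indeed, the condition "$v\neq u$ and $v$ factors through $\pi_\alpha$" already forces the underlying function on $X_\alpha$ to differ from $f$ (since $\pi_\alpha$ is surjective), which is all that is needed; so there is no need to separately argue that distinct pairs $(\alpha,f)$ yield distinct functions on the product. That is the only place where one might otherwise be tempted to add an unnecessary hypothesis (e.g. that the members of each $F_\alpha$ are nonconstant, which in fact holds automatically by minimality whenever $|X_\alpha|>1$).
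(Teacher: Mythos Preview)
Your proof is correct and follows essentially the same approach as the paper's: pull back each $F_\alpha$ along the coordinate projection $\pi_\alpha$, take the union, and witness minimality of $f\circ\pi_\alpha$ by a pair of points differing only in the $\alpha$th coordinate at the witness pair $x_f,y_f$. You are simply more explicit than the paper about the degenerate cases and about why surjectivity of $\pi_\alpha$ makes the potential non-injectivity of $(\alpha,f)\mapsto f\circ\pi_\alpha$ harmless.
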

\begin{proof} Fix a minimal separating family $\mathcal F_\alpha$ for each 
 $X_\alpha$. Put  $\mathcal F=\{f\circ p_\alpha: 
\alpha\in A, f\in\mathcal F_\alpha\}$. Clearly, this family is 
separating (because for any two distinct points $x$, $y$ in the product 
there is an $\alpha\in A$ such that $p_\alpha(x)\neq p_\alpha(y)$, and 
there is an $f\in\mathcal F_\alpha$ with $f(p_\alpha(x))\neq 
f(p_\alpha(y))$. This family is minimal, because
if we remove some $f\circ p_\alpha$, then there are $a,b\in X_\alpha$ 
that are
not separated by $\mathcal F_\alpha\setminus \{f\}$. Now let $x$ and $y$ 
be points of the product whose $\alpha$th coordinates are $a$ and $b$, 
and all the remaining coordinates are equal. Then the only projection 
that separates
these points is $p_\alpha$, and the images under $p_\alpha$ are not 
separated in $X_\alpha$ by $\mathcal F\setminus \{f\}$, and that's it.
\end{proof}

\par\bigskip\noindent
Next, let us look for traces of the property in  images and pre-images. Since any discrete spaces has the property, the property is not preserved by continuous maps, even bijective ones. Statements of Example \ref{ex:betaomega} and Proposition \ref{pro:double}  imply that the property is not inherited by taking the inverse image under a continuous injection. On a positive note,  the domain of a continuous bijection has the property if the range does. Indeed, let $F$ be a minimal separating function set for  $X$ and let  $h:Y\to X$ be  a continuous bijection. Put $G=\{f\circ h: f\in F\}$. Since $h$ is injective, $G$ is a separating family for $Y$. Since $F$ is minimal, for each $f\in F$ we can fix distinct $x_1,x_2\in X$ that are separated by $f$ but not any other function in $F$. By surjectivity, $y_1=h^{-1}(x_1),y_2=h^{-1}(x_2)$ are defined. Clearly, $y_1$ and $y_2$ are separated by $f\circ h$ but not any other function in $G$.  Let us record our observation as a statement.

\par\bigskip\noindent
\begin{pro}\label{pro:bijection}
If $X$ admits a continuous bijection onto a space with a minimal separating set of functions, then $X$ has such a set too.
\end{pro}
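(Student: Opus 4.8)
The plan is to pull the minimal separating set back along the bijection. Write $h\colon X\to Y$ for the given continuous bijection and fix a minimal separating set $F\subset C_p(Y)$ in the space it maps onto. Since $h$ is continuous, $f\circ h\in C_p(X)$ for every $f\in F$, so $G=\{f\circ h:f\in F\}$ is a genuine subset of $C_p(X)$; this will be the candidate. A preliminary remark to record: because $h$ is \emph{surjective}, the assignment $f\mapsto f\circ h$ is injective on $F$ (if $f\circ h=f'\circ h$, then $f$ and $f'$ agree on $h(X)=Y$, so $f=f'$), hence $|G|=|F|$ and, more usefully, every proper subset of $G$ is of the form $\{f\circ h:f\in F'\}$ for a uniquely determined proper subset $F'\subsetneq F$.

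First I would verify that $G$ separates points of $X$. Given distinct $x_1,x_2\in X$, \emph{injectivity} of $h$ gives $h(x_1)\neq h(x_2)$, so some $f\in F$ satisfies $f(h(x_1))\neq f(h(x_2))$, i.e. $(f\circ h)(x_1)\neq (f\circ h)(x_2)$; hence $f\circ h\in G$ separates $x_1$ and $x_2$.

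Next I would establish minimality of $G$ by reducing it to the minimality of $F$. Suppose $G'\subsetneq G$ were separating; by the remark above $G'=\{f\circ h:f\in F'\}$ for some $F'\subsetneq F$. For distinct $u_1,u_2\in Y$, \emph{surjectivity} of $h$ lets us pick $x_i$ with $h(x_i)=u_i$, and these are distinct; some $f\circ h\in G'$ separates $x_1,x_2$, so the corresponding $f\in F'$ satisfies $f(u_1)\neq f(u_2)$. Thus $F'$ would be separating for $Y$, contradicting the minimality of $F$. (Equivalently, in the hands-on form used elsewhere in the paper: for each $f\in F$ fix distinct $u_1,u_2\in Y$ separated by $f$ and by no other member of $F$, set $x_i=h^{-1}(u_i)$, and observe that $f\circ h$ separates $x_1,x_2$ while no $g\circ h$ with $g\in F\setminus\{f\}$ does, since $g$ does not separate $u_1,u_2$.)

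I do not expect a real obstacle here. The one point that deserves a moment's attention is that deleting a single element of $G$ must correspond to deleting a single element of $F$ — i.e. the pullback map $f\mapsto f\circ h$ must be injective — and this is precisely where surjectivity of $h$, rather than mere injectivity, is used. In short, injectivity of $h$ supplies the point-separation of $G$, surjectivity of $h$ supplies its minimality, and combining the two yields the statement.
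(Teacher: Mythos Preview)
Your proof is correct and follows the same approach as the paper: pull back the minimal separating set along the continuous bijection and use injectivity of $h$ for separation, surjectivity for minimality. The paper's argument is exactly the ``hands-on'' version you include parenthetically (fix witnesses $u_1,u_2$ separated only by $f$, pull them back via $h^{-1}$); you are simply more explicit than the paper about why the assignment $f\mapsto f\circ h$ is injective, a point the paper uses tacitly when it says ``not any other function in $G$''.
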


\par\bigskip\noindent
Our discussion prompts the following question.

\par\bigskip\noindent
\begin{que}
Let $X$ have a minimal separating set of functions and let $Y$ be $t$-equivalent to $X$. Does $Y$ have such a set?
\end{que}

\par\bigskip
\section{A Glance at Minimal Separating Closed or Compact Function Sets}\label{S:minimalgeneral}
\par\bigskip\noindent
It is always natural to expect more interesting assertions about more rigid structures. With this in mind, we will next apply the minimality requirement within narrower classes of function sets. For convenience, let us recall the definitions from the introduction section.

\par\bigskip\noindent
\begin{defin}
We say that $F\subset C_p(X)$ is a minimal compact  separating set if it is compact and separating and contains no proper compact separating subset. Similarly,  $F\subset C_p(X)$ is a minimal closed  separating set if it is closed and separating and contains no proper closed separating subset.
\end{defin}

\par\bigskip\noindent
Let us start with a non-finite example of a minimal compact separating function set.

\par\bigskip\noindent
\begin{ex}\label{ex:mincomsep}
 $X= \{0\}\cup \{1/(n+1):n\in\omega\}$ has a minimal compact separating set.
\end{ex}
\begin{proof}
Let 
$f_n(x)=1/(n+1)$ if $x=1/(n+1)$ and $0$ otherwise. Then 
$\{f_n:n\in\omega\}$ converges to $0$.  Therefore, $K=\{0\}\cup\{f_n:n\in\omega\}$ 
is compact.
The function $f_n$ is the only one that separates $1/(n+1)$ from 0, and 
every
compact subspace of $K$ that contains all $f_n$ is all of $K$.
\end{proof}

\par\bigskip\noindent
The argument of Example \ref{ex:mincomsep} can be used to prove the following statement.
\par\bigskip\noindent
\begin{pro}\label{pro:closureofminsep}
Let  $F$ be a minimal separating function set of $X$. Then $cl_{C_p(X)}(F)$ is minimal closed separating function set.
\end{pro}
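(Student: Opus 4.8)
The plan is to imitate the mechanism of Example~\ref{ex:mincomsep}. Set $\overline F := cl_{C_p(X)}(F)$. This set is closed by construction and separating because it contains the separating set $F$; hence the whole task is to show that every closed separating $C\subseteq\overline F$ equals $\overline F$, and for that it suffices to show $F\subseteq C$ (a closed set containing $F$ must contain $\overline F$). So the real work is to locate, inside the enlarged set $\overline F$, witnesses that force every separating subset to retain each original function.

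First I would recall the witnesses supplied by minimality of $F$, exactly as in the proofs of Propositions~\ref{pro:discreteinsquare} and \ref{pro:minimalthendiscrete}: for each $f\in F$ fix distinct $x_f,y_f\in X$ that are separated by $f$ but by no other member of $F$. The key lemma is that $f$ is still the \emph{only} element of the larger set $\overline F$ separating $x_f$ from $y_f$. To prove it, take any $g\in\overline F$ with $g\neq f$; since $C_p(X)$ is Hausdorff, $\{f\}$ is closed, so if $(g_i)$ is a net in $F$ converging (pointwise) to $g$, then $g_i\neq f$ eventually, i.e.\ $g_i\in F\setminus\{f\}$ eventually. For such $i$ we have $g_i(x_f)=g_i(y_f)$, and passing to the pointwise limit gives $g(x_f)=g(y_f)$. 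Thus no point of $\overline F\setminus\{f\}$ separates $x_f$ from $y_f$.

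Granting the lemma the conclusion is immediate: if $C\subseteq\overline F$ is separating, then for each $f\in F$ some element of $C\subseteq\overline F$ separates $x_f$ from $y_f$, and by the lemma that element must be $f$; hence $F\subseteq C$, and since $C$ is closed, $C\supseteq\overline F$, so $C=\overline F$. The only step with any subtlety is the lemma, and the one thing to watch is the possibility that the net converging to $g$ meets $f$ cofinally — this is excluded precisely because $g\neq f$ and singletons are closed in $C_p(X)$ (equivalently, one may invoke the self-discreteness of $F$ from Proposition~\ref{pro:minimalthendiscrete}). Everything else is bookkeeping.
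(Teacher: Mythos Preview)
Your argument is correct and follows exactly the approach the paper intends: the paper does not spell out a proof but simply says the argument of Example~\ref{ex:mincomsep} applies, and that is precisely what you reproduce --- each $f\in F$ remains the unique separator of its witness pair $(x_f,y_f)$ inside $\overline F$, forcing any closed separating subset to contain $F$ and hence $\overline F$. Your net argument for the key lemma is fine; an equivalent one-line formulation is that $\{h:h(x_f)=h(y_f)\}$ is closed in $C_p(X)$ and contains $F\setminus\{f\}$, so it contains $\overline F\setminus\{f\}$.
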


\par\bigskip\noindent
Statements  \ref{ex:mincomsep} and \ref{pro:closureofminsep}  prompt the following questions.
\begin{que}
Is there a minimal closed (compact) separating function set without an isolated point?
\end{que}

\par\bigskip\noindent
\begin{que}
Does every second-countable (compact)  space have a minimal compact separating functionset?  Any Eberlein compactum?
\end{que}

\par\bigskip\noindent
\begin{que}
Does having a minimal closed (compact) separating function  set imply having a minimal separating function set?
\end{que}

\par\bigskip\noindent
\begin{que}
Is the property of having a minimal compact (closed) function set productive?
\end{que}
\par\bigskip\noindent
\begin{que}
Let $X$ have a second countable separating function set. Does $X$ have a minimal separating function set?
\end{que}

\par\bigskip\noindent
We would like to finish with two questions that were the main targets of this study and partly addressed by Proposition 
\ref{pro:discreteinsquare} and Theorem \ref{thm:criteriongeneral}.
\par\bigskip\noindent
\begin{que}\label{que:main}
Is it true that $C_p(X)$ has a minimal separating $\tau$-sized subset if and only if $X^2\setminus \Delta_X$ has a closed discrete $\tau$-sized subset. What if  $X$ is compact? What if $X$ is zero-dimensional?
\end{que}

\par\bigskip\noindent
\begin{que}
Is it true that having a minimal function separating set is equivalent to having a minimal separating family of functionally  open sets? 
\end{que}

\par\bigskip

\end{document}